\documentclass[11pt]{amsart}
\usepackage{amsmath}
\usepackage{amssymb}
\usepackage{graphicx}
\newtheorem{theorem}{Theorem}[section]

\newtheorem{lemma}[theorem]{Lemma}

\theoremstyle{definition}

\numberwithin{equation}{section}
\title[Bridge to Hyperbolic Polygonal Billiards]{Bridge to Hyperbolic Polygonal Billiards}
\author[H. Attarchi]{Hassan Attarchi}
\address[H. Attarchi]{School of Mathematics, Georgia Institute of Technology, Atlanta, US}
\email{{\tt hattarchi@gatech.edu}}
\author[L. A. Bunimovich]{Leonid A. Bunimovich}
\address[L. A. Bunimovich]{School of Mathematics, Georgia Institute of Technology, Atlanta, US}
\email{\tt leonid.bunimovich@math.gatech.edu}
\thanks{To Yakov Grigorievich Sinai on his 85 birthday}
\keywords{Mathematical billiards, Billiards in polygons, Physical billiards, Hyperbolicity, Kolmogorov-Sinai entropy.}
\subjclass[2010]{37D50, 37D05}
\begin{document}
\begin{abstract}
It is well-known that billiards in polygons cannot be chaotic (hyperbolic). Particularly Kolmogorov-Sinai entropy of any polygonal billiard is zero. We consider physical polygonal billiards where a moving particle is a hard disc rather than a point (mathematical) particle and show that typical physical polygonal billiard is hyperbolic at least on a subset of positive measure and therefore has a positive Kolmogorov-Sinai entropy for any positive radius of the moving particle (provided that the particle is not so big that it cannot move within a polygon). This happens because a typical physical polygonal billiard is equivalent to a mathematical (point particle) semi-dispersing billiard. We also conjecture that in fact typical physical billiard in polygon is ergodic under the same conditions. 
\end{abstract}
\maketitle
\section{Introduction}
In the last years, there has been significant research on billiards in polygons \cite{BT12,BGKT98,FU14,FSU18,Gal83,ST15,Vor97}. Dynamics of these models is extremely difficult to rigorously analyze which often happens with systems with intermediate, neither regular (integrable) nor chaotic, behavior. Billiards were introduced and successfully used as models for numerous phenomena and processes in nature, especially in physics \cite{DCB99,GO74,HC69,Lor05,WL71}. However, only mathematical billiards were considered, where a point (mathematical) particle moves. There are no and there will be no such particles in reality. Nevertheless, studies of just mathematical billiards were considered to be sufficient. The reasons for that were twofold. First, a system with real (finite size) particles could be sometimes reduced to a mathematical (point particle) billiard in some peculiar billiard table. Such situation takes place for instance for celebrated Boltzmann gas of hard spheres \cite{Boltz}. Moreover, all basic examples of billiards with regular dynamics (e.g. billiards in circles and rectangles) have absolutely the same dynamics if one considers a (not too large) hard disc moving within the same billiard table. The same happens to the most popular chaotic (hyperbolic) billiards like Sinai billiards and squashes (stadium is a special case of a squash) \cite{Bun19,Sin70}.\par
It has been shown, however, that the transition to physical billiards can completely change the dynamics. Moreover, any type of chaos-order or order-chaos transition may occur \cite{Bun19}. In particular, it has been shown that classical Ehrenfests' Wind-Tree gas has richer dynamics than the Lorentz gas if a moving particle is real (physical) \cite{ABB}. In the present paper, we show that typical physical billiard in polygons is chaotic for an arbitrarily small size (radius) of a moving particle. The last means that physical billiards in generic polygons are hyperbolic on a subset of positive measure and, particularly, have a positive Kolmogorov-Sinai entropy to the contrary to mathematical billiards in polygons, which have zero KS-entropy.
\section{Billiards in Polygons}
    Let $\mathbf{P}$ be the space of all closed polygons in $\mathbf{R}^2$ and $\mathbf{P}_n\subset\mathbf{P}$ denote the space of all polygons with $n$ vertices. Let $\{v_0,\ v_1,\ \dots,\ v_{n-1}\}$ be the set of vertices of a polygon in $\mathbf{P}_n$. If we fix one side of this polygon on the $x$-axis and one of the vertices of that side at the origin (e.g. $v_0=(0,0)$ and $v_{n-1}=(x,0)$) then the embedding $\mathbf{P}_n\rightarrow\mathbf{R}^{2n-3}$ induces a topology in $\mathbf{P}_n$ such that its corresponding metric makes the space $\mathbf{P}_n$ complete \cite{ZK75}. If all angles of a polygon are commensurate with $\pi$, then it is called a rational polygon. It is well-known that rational polygons are dense in $\mathbf{P}$.\par
	A billiard in a polygon $P\in\mathbf{P}$ is a dynamical system generated by the motion of a point particle along a straight line inside $P$ with unit speed and elastic reflections off its boundary $\partial P$.\par
	The phase space of this dynamical system is
	$$\Lambda_P=\{(x,\varphi)\in\partial P\times(\frac{-\pi}{2},\frac{\pi}{2})\ :\ x\ is\ not\ a\ vertex\ of\ P\},$$
	where $\varphi$ is the reflection angle with respect to the inward normal vector $n(x)$ to the boundary at reflection point $x\in\partial P$.\par
	Let $\gamma$ be a billiard orbit in the polygon $P$. If the orbit $\gamma$ hits a side of the boundary $\partial P$ then instead of reflecting the orbit $\gamma$ off that side of $P$, one may reflect $P$ about that side. Denote the reflected polygon by $P_1$. The unfolded orbit $\gamma$ is a straight line as the continuation of $\gamma$ in $P_1$. In the geometric optics this procedure is called the method of images or unfolding \cite{Gut86}. Continuing this procedure for $n$ consecutive reflections of the orbit $\gamma$, we obtain a sequence of polygons $P,\ P_1,\ P_2, \dots,\ P_n$ where the unfolded orbit $\gamma$ is a straight segment through $P_1$ to $P_n$ (Fig. \ref{fig0}).\par
	\begin{figure}
	\centering
	    \includegraphics[width=7.5cm]{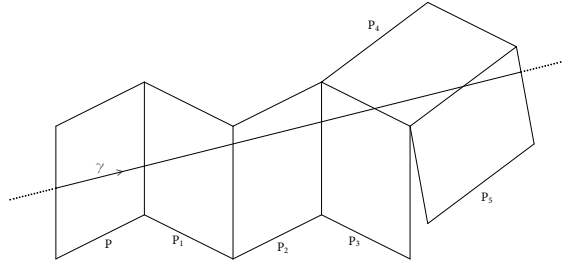}\caption{The unfolding process.}\label{fig0}
    \end{figure}
    The unfolding process can also be done backward in time. A trajectory stops when it hits a vertex. The unfolded orbit $\gamma$ is a finite segment if it hits vertices of $P$ both in the future and in the past. Such trajectories are called \emph{generalized diagonals}. In \cite{Gut86,ZK75}, it is shown that the set of generalized diagonals of the polygon $P\in\mathbf{P}$ is countable.\par
    Consider $(x,\varphi)\in\Lambda_P$. A direction $\varphi$ at point $x$ is called an \emph{exceptional direction} if its trajectory hits a vertex of $P$. It is not difficult to see that the number of these exceptional directions is countable at each point $x$.
\section{Physical Billiards in non-Convex Polygons}
	A physical billiard in a domain (billiard table) is generated by the motion of a hard ball (disc) of radius $r>0$ in that domain with unit speed and elastic reflections off the boundary. To represent the dynamics of such ball (a physical particle) of radius $r>0$, it is enough to follow the motion of its center. We can see that the center of particle moves within a smaller billiard table, which one gets by moving any point $x$ of the boundary by $r$ to the interior of the billiard table along the internal normal vector $n(x)$ \cite{Bun19}.\par
	It is easy to see that dynamics of a physical billiard in a convex simply connected polygon is completely equivalent to dynamics of a mathematical billiard in this polygon \cite{Bun19}. However, the situation is totally different for non-convex polygons (more precisely, polygons with at least one reflex angle, see Fig. \ref{fig1}). In this case, the boundary of the equivalent mathematical billiard acquires some dispersing parts, which are arcs of a circle of radius $r$ (see e.g. \cite{CF16,CFZ18}).\par
	\begin{figure}[h]
	\centering
	    \includegraphics[width=10cm]{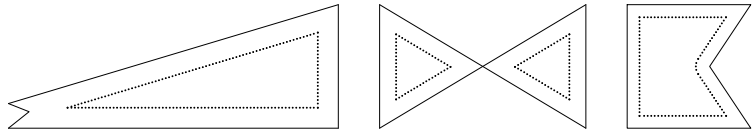}\caption{To have dispersing parts in the boundary of mathematical billiards equivalent to physical billiards in non-convex polygons, the particle has to be small enough and the polygon has to have at least one reflex angle.}\label{fig1}
    \end{figure}
    Let $\mathbf{P}_{ref}$ be the set of all polygons that they have at least one reflex angle.	To show that $\mathbf{P}_{ref}$ is dense in $\mathbf{P}$, we use the metric $d(.,.)$ on $\mathbf{P}$ which is defined as:
	\begin{equation}\label{e0}
	d(P,Q)=\int_{\mathbf{R}^2}|\chi_P(x)-\chi_Q(x)|dx,
	\end{equation}
	where $P,Q\in\mathbf{P}$ and,
	$$\chi_P(x)=\left\{\begin{array}{l}
	1\hspace{.5cm}x\in P,\\
	0\hspace{.5cm}otherwise.
	\end{array}\right.$$
	The topology induced by the metric $d(.,.)$ in $\mathbf{P}_n$ is equivalent to the induced topology in $\mathbf{P}_n$ by the embedding $\mathbf{P}_n\rightarrow\mathbf{R}^{2n-3}$. Thus, rational polygons are dense in $\mathbf{P}$ with respect to the metric $d(.,.)$.
    \begin{lemma}\label{lem1}
		$\mathbf{P}_{ref}$ is dense in $\mathbf{P}$.
	\end{lemma}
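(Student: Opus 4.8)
The plan is to approximate, in the metric $d(\cdot,\cdot)$, an arbitrary polygon by one carrying a reflex angle, by gluing a very thin triangular ``tooth'' onto one of its sides. If $P$ already has a reflex angle there is nothing to prove, so assume $P\in\mathbf{P}_n$ is convex.

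First I would fix a side $s=v_iv_{i+1}$ of $P$ and, after an isometry, place it on the $x$-axis with $P$ contained in the closed upper half-plane. Choose two points $a,b$ in the relative interior of $s$, with $a$ to the left of $b$, and let $c$ be a point with small negative second coordinate lying below the segment $[a,b]$; then $c\notin P$, and the closed triangle $T=T(a,b,c)$ meets $P$ exactly along $[a,b]$. Since the chosen compact sub-segment $[a,b]\subset\operatorname{int}(s)$ lies at a positive distance from $\partial P\setminus s$ (a compact set disjoint from it), for $T$ small enough $Q:=P\cup T$ is again a simple closed polygon, its boundary obtained from $\partial P$ by replacing the sub-segment $[a,b]$ of $s$ with the polygonal path $a\to c\to b$. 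Hence $Q\in\mathbf{P}_{n+3}\subset\mathbf{P}$.

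Next I would check that $Q$ has a reflex angle. Traverse $\partial Q$ so that $Q$ lies to the left; along $s$ the traversal direction is $(1,0)$ with the interior above. At the vertex $a$ the outgoing edge points towards $c$, i.e.\ in a direction of the form $(\cos\theta,-\sin\theta)$ with $\theta>0$ small, so the boundary turns \emph{clockwise} at $a$; consequently the interior angle of $Q$ at $a$ equals $\pi+\theta>\pi$. Thus $a$ (and, symmetrically, $b$) is a reflex vertex, so $Q\in\mathbf{P}_{ref}$.

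Finally, $\chi_P$ and $\chi_Q$ differ only on $T$, so
\[
d(P,Q)=\int_{\mathbf{R}^2}|\chi_P(x)-\chi_Q(x)|\,dx=\operatorname{area}(T)=\tfrac12\,|b-a|\cdot h,
\]
where $h=\operatorname{dist}\bigl(c,\,\text{$x$-axis}\bigr)$; this quantity tends to $0$ as $|b-a|\to0$ and $h\to0$. Hence for every $\varepsilon>0$ one obtains $Q\in\mathbf{P}_{ref}$ with $d(P,Q)<\varepsilon$, proving the claimed density. The only point requiring genuine care is the verification that, for the tooth small enough, $Q$ stays a \emph{simple} polygon, i.e.\ the tooth does not meet $\partial P\setminus s$; this is precisely where positivity of $\operatorname{dist}\bigl([a,b],\,\partial P\setminus s\bigr)$ enters, and it is the only non-formal step.
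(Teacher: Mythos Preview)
Your proof is correct and follows essentially the same idea as the paper's: perturb a single edge of $P$ by a thin triangle of vanishing area to manufacture a reflex vertex, using that $d(P,Q)$ equals the area of the symmetric difference. The only cosmetic difference is that the paper pushes one new vertex $v_{n_k}$ \emph{inward} along the perpendicular bisector of an edge (so the reflex angle sits at that single added vertex), while you glue an outward tooth and obtain the reflex angle at its base vertices $a,b$; this adds three vertices instead of one, but the argument is otherwise identical.
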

	\begin{proof}
	    Let $P\in\mathbf{P}$ be a polygon with $n$ vertices $\{v_0,\ v_1,\ \dots,\ v_{n-1}\}$. Without loss of generality, we assume that the randomly chosen edge of $P$ is $v_0v_{n-1}$. On the perpendicular bisector of $v_0v_{n-1}$ in the interior of $P$, we choose a sequence of points $\{v_{n_k}\}_{k=k_0}^\infty$ such that the reflex angle $\angle v_0v_{n_k}v_{n-1}=\pi+\frac{\pi}{k}$ ($k_0$ is big enough to have $v_{n_k}$ for $k\geq k_0$ in the interior of $P$, see Fig. \ref{fig15}).\par
	    \begin{figure}[h]
	    \centering
	        \includegraphics[width=6cm]{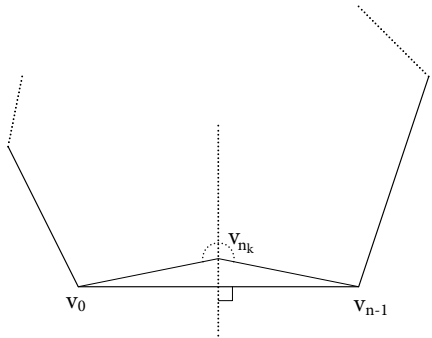}\caption{Replacing one edge of a polygon by a reflex angle.}\label{fig15}
       \end{figure}
	    If we denote the non-convex polygons with vertices $\{v_0,\ v_1,\ \dots,\ v_{n-1},\ v_{n_k}\}$ by $P_k$ then it follows that $P_k\in\mathbf{P}_{ref}$ and
	    $$d(P,P_k)\rightarrow0,$$
	    as $k\rightarrow\infty$.
	\end{proof}	
	\begin{lemma}\label{lem2}
	    $\mathbf{P}_{ref}$ is open in $\mathbf{P}$.
	\end{lemma}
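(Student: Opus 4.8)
The plan is to prove that \emph{non-convexity is stable} in the metric $d$. Since a simple polygon is convex exactly when none of its interior angles exceeds $\pi$, the complement of $\mathbf{P}_{ref}$ in $\mathbf{P}$ is precisely the set of convex polygons; so it suffices to show that for every $P\in\mathbf{P}_{ref}$ there is a $\delta>0$ such that any $Q\in\mathbf{P}$ with $d(P,Q)<\delta$ is again non-convex, hence lies in $\mathbf{P}_{ref}$. I emphasize up front that one cannot do this by invoking continuity of the interior angles inside a single chart $\mathbf{P}_n\hookrightarrow\mathbf{R}^{2n-3}$: the proof of Lemma~\ref{lem1} shows that an $n$-gon can be a $d$-limit of $(n{+}1)$-gons, so the number of vertices is not locally constant. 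I will therefore run the whole argument through the area metric, using a description of non-convexity that is manifestly robust.

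First I would fix a geometric witness of the non-convexity of $P$. Let $v$ be a reflex vertex, with interior angle $\theta>\pi$. Choose $x,y\in\operatorname{int}(P)$, one near each of the two edges incident to $v$, close enough that the midpoint $z:=\tfrac12(x+y)$ falls into the notch cut out at $v$, hence into the open set $\mathbf{R}^2\setminus P$. Because $\operatorname{int}(P)$ and $\mathbf{R}^2\setminus P$ are open, pick $\epsilon>0$ with $\overline{B}_\epsilon(x),\overline{B}_\epsilon(y)\subseteq\operatorname{int}(P)$ and $\overline{B}_\epsilon(z)\cap P=\varnothing$. The only property of this configuration I will use is the identity $\tfrac12\bigl(\overline{B}_\epsilon(x)+\overline{B}_\epsilon(y)\bigr)=\overline{B}_\epsilon(z)$: the midpoint of any $p\in\overline{B}_\epsilon(x)$ and any $q\in\overline{B}_\epsilon(y)$ lies in $\overline{B}_\epsilon(z)$, hence outside $P$.

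Next comes the area estimate. Set $\delta:=\tfrac16\pi\epsilon^2$ and suppose, for contradiction, that some convex $Q\in\mathbf{P}$ satisfies $d(P,Q)<\delta$. Since $\overline{B}_\epsilon(x)\subseteq P$, we have $\overline{B}_\epsilon(x)\setminus Q\subseteq P\setminus Q$, so $|Q\cap B_\epsilon(x)|\ge\pi\epsilon^2-|P\setminus Q|>\pi\epsilon^2-\delta>0$; likewise $Q\cap\overline{B}_\epsilon(y)\neq\varnothing$, and we fix a point $q$ in it. For every $p$ in $A:=Q\cap B_\epsilon(x)$, convexity of $Q$ gives $\tfrac12(p+q)\in Q$, while the identity above gives $\tfrac12(p+q)\in\overline{B}_\epsilon(z)\subseteq\mathbf{R}^2\setminus P$; hence $\tfrac12 A+\tfrac12 q\subseteq Q\setminus P$. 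Since $p\mapsto\tfrac12(p+q)$ multiplies planar area by $\tfrac14$, this gives $|Q\setminus P|\ge\tfrac14|A|>\tfrac14(\pi\epsilon^2-\delta)=\tfrac{5}{24}\pi\epsilon^2>\delta$, contradicting $|Q\setminus P|\le d(P,Q)<\delta$. Hence no convex polygon lies within $d$-distance $\delta$ of $P$, so $\{Q:d(P,Q)<\delta\}\subseteq\mathbf{P}_{ref}$ and $\mathbf{P}_{ref}$ is open.

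I expect the only real obstacle to be the conceptual one flagged at the start --- recognizing that the naive chart-wise continuity argument does not apply because small $d$-perturbations can create or destroy vertices --- which is exactly what dictates the measure-theoretic route above. Beyond that the work is a single elementary estimate; the constants ($\delta=\tfrac16\pi\epsilon^2$ and the factor $\tfrac14$) are not optimized, the only requirement being $\tfrac14(\pi\epsilon^2-\delta)>\delta$.
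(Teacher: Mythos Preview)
Your argument is correct and considerably more careful than the paper's, which consists of the single sentence ``It is easy to see that any perturbation of $P\in\mathbf{P}_{ref}$ will have at least one reflex angle.'' The paper offers no mechanism; you supply one. Your route is genuinely different: rather than tracking angles under perturbation, you translate ``has a reflex angle'' into ``fails the midpoint-convexity test on a pair of interior points,'' and then show this failure is stable in the area metric $d$ by a direct measure estimate. What this buys you is exactly what you flagged at the outset: your proof works uniformly across all of $\mathbf{P}$, without assuming the nearby polygon $Q$ has the same number of vertices as $P$ --- a point the paper's one-liner leaves implicit (or unaddressed). The identification of the complement of $\mathbf{P}_{ref}$ with the convex polygons is harmless here, since any non--simply-connected polygon already carries reflex angles along its inner boundary, so the complement really does consist of convex simple polygons. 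Your constants are fine; the only inequality needed is $\tfrac14(\pi\epsilon^2-\delta)>\delta$, which your choice $\delta=\tfrac16\pi\epsilon^2$ satisfies.
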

	\begin{proof}
	    It is easy to see that any perturbation of $P\in\mathbf{P}_{ref}$ will have at least one reflex angle. This means $\mathbf{P}_{ref}$ is open in $\mathbf{P}$.
	\end{proof}
    Continued fractions for billiards were introduced in Sinai's fundamental paper \cite{Sin70}. They serve as a basic tool for analysis of billiards dynamics. Let $0=t_0<t_1<t_2<\dots$ be the reflection times of the trajectory $\gamma$ off the boundary $\partial Q$ where $Q$ is an arbitrary billiard table. Denote by $\kappa_i$ the curvature of the boundary at the $i$ \!\!th reflection point with respect to the inward unit normal vectors $n(x)$ to the boundary at $x\in\partial Q$, and by $\varphi_i$ the $i$ \!\!th reflection angle such that $\frac{-\pi}{2}<\varphi_i<\frac{\pi}{2}$. The corresponding continued fraction of this trajectory is given by
    $$\varkappa=\cfrac{1}{\tau_1+\cfrac{1}{\cfrac{2\kappa_1}{\cos\varphi_1}+\cfrac{1}{\tau_2+\cfrac{1}{\cfrac{2\kappa_2}{\cos\varphi_2}+\cfrac{1}{\tau_3+\cfrac{1}{\ddots}}}}}},$$
	where $\tau_i=t_i-t_{i-1}$ for $i=1,2,\dots$.\par
	Let $P\in\mathbf{P}_{ref}$, then the curvature of boundary components of the mathematical billiard equivalent to the physical billiard in $P$ is either $0$ or $\frac{1}{r}$. If an orbit hits the dispersing components infinitely many times where reflection numbers on dispersing parts are given by the sequence $\{i_k\}_{k=1}^\infty$, then the continued fraction of this orbit will have the following form between $i_j$ \!\!th and $i_{j+1}$ \!\!th reflections,
	$$\ddots+\cfrac{1}{\cfrac{2}{r\cos\varphi_{i_j}}+\cfrac{1}{(\tau_{i_j+1}+\dots+\tau_{i_{j+1}})+\cfrac{1}{\cfrac{2}{r\cos\varphi_{i_{j+1}}}+\cfrac{1}{\ddots}}}}.$$
	All elements of continued fractions in this case are positive. Also, almost any orbit has finitely many reflections within any finite time interval, since the boundary components are $C^\infty$ (they are line segments or arcs of a circle of radius $r$). Therefore,
	\begin{equation}\label{SS-thm}
	\sum_{k=0}^\infty\left((\tau_{i_k+1}+\dots+\tau_{i_{k+1}})+\frac{2}{r\cos\varphi_{i_{k+1}}}\right)=\infty,
	\end{equation}
	where $i_0=0$.\par
	Let $\mathbf{\hat{P}}$ denote the space of all non-convex simply connected rational polygons. Then, $\mathbf{\hat{P}}\subset\mathbf{P}_{ref}$.
	\begin{theorem}\label{thm1}
		For any $P\in\mathbf{\hat{P}}$, there exists $r_P>0$ such that the physical billiard in $P$ is hyperbolic for all $r<r_P$.
	\end{theorem}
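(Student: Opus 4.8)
The plan is to realise the physical billiard in $P$ as a mathematical semi-dispersing billiard and then to run the continued-fraction (monotone-cone) argument for billiard hyperbolicity of \cite{Sin70}, whose key input --- divergence of the series \eqref{SS-thm} --- has already been prepared above. First I would fix $P\in\mathbf{\hat{P}}$ and, since $P$ is non-convex, single out a reflex vertex. Then I would let $r_P>0$ be the supremum of those radii $r$ for which (i) the region $Q_r$ obtained by pushing $\partial P$ inward by $r$ along the inner normals has non-empty interior (so that a disc of radius $r$ can actually move inside $P$), and (ii) near every reflex vertex the two inward-translated edges are joined by a genuine, non-degenerate circular arc of radius $r$ centred at that vertex, these finitely many arcs being pairwise disjoint. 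For all $r<r_P$ the physical billiard in $P$ is equivalent to the mathematical billiard in $Q_r$ (cf.\ \cite{Bun19}), whose boundary is piecewise $C^\infty$ ($C^1$ except at the convex vertices of $P$) and consists of finitely many segments (curvature $0$) together with finitely many dispersing arcs of radius $r$ (curvature $1/r$ with respect to the inward normal). This is a semi-dispersing billiard with non-empty dispersing boundary; it preserves the standard finite smooth invariant measure $\mu$ (with density proportional to $\cos\varphi$), and --- its boundary having only finitely many singular points, and the flat part coming from a rational polygon --- it falls within the Katok--Strelcyn framework for billiards with singularities.

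Next I would isolate a positive-measure invariant set on which hyperbolicity holds. Let $D\subset\Lambda_{Q_r}$ be the set of phase points whose footpoint lies on one of the dispersing arcs; since the dispersing arcs have positive total length, $\mu(D)>0$. Writing $T$ for the billiard map of $Q_r$ and applying the Poincar\'e recurrence theorem to $T$ and to $T^{-1}$, $\mu$-almost every point of $D$ returns to $D$ infinitely often in forward and in backward time; hence the $T$-invariant set $\mathcal H$ of orbits reflecting off the dispersing arcs infinitely often as $t\to+\infty$ and as $t\to-\infty$ satisfies $\mu(\mathcal H)\ge\mu(D)>0$. For an orbit in $\mathcal H$ the sequence $\{i_k\}$ of indices of its dispersing reflections is infinite in both directions, so its continued fraction has all elements positive and, each dispersing term being $2/(r\cos\varphi_{i_k})\ge 2/r$, the series \eqref{SS-thm} diverges; moreover, as noted just before the theorem, a.e.\ such orbit has only finitely many reflections in any bounded time interval. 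I would then invoke the quasi-invariant-cone argument: divergence of \eqref{SS-thm} forces the divergent (dispersing) wavefronts transported along the orbit to be strictly expanded infinitely often and never contracted, so the derivative cocycle is eventually strictly monotone, which produces non-zero Lyapunov exponents $\mu$-almost everywhere on $\mathcal H$. Therefore $Q_r$, and with it the physical billiard in $P$, is hyperbolic on the positive-measure invariant set $\mathcal H$ for every $r<r_P$; by Pesin's entropy formula in the Katok--Strelcyn setting it then has positive Kolmogorov--Sinai entropy.

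The hard part will be the last step: turning the divergence \eqref{SS-thm} into a genuine statement about non-vanishing Lyapunov exponents. One must check that $Q_r$ really meets the hypotheses of the general hyperbolicity theorem for billiards with singularities --- finiteness of the invariant measure is immediate, but the (logarithmic) Katok--Strelcyn regularity of the singularity manifolds, which here are generated by the convex vertices of $P$, by the curvature-jump junctions between arcs and segments, and by trajectories tangent to the arcs, has to be verified, and this is precisely where the rational-polygonal structure of $P$ enters, keeping the singularity set of the flat part tame --- and then one must set up the standard one-sided cone field of non-negatively curved wavefronts so that \eqref{SS-thm} controls the growth of the associated Lyapunov norm. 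A secondary, essentially routine, point is to make the definition of $r_P$ explicit enough that the finitely many circular arcs are simultaneously non-degenerate, pairwise disjoint, compatible with the flat pieces of $\partial Q_r$, and leave $Q_r$ with non-empty interior.
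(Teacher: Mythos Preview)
Your argument is essentially correct but proves a weaker conclusion than the paper intends, and it misidentifies where the rationality hypothesis is used. You obtain hyperbolicity only on the positive-measure invariant set $\mathcal H$ generated by the dispersing arcs via Poincar\'e recurrence; this is exactly the content of the theorem that follows Theorem~\ref{thm1} in the paper, which holds for every $P\in\mathbf{P}_{ref}$ regardless of rationality. Theorem~\ref{thm1}, by contrast, asserts hyperbolicity on a set of \emph{full} measure in phase space---this is precisely the point of restricting to $\mathbf{\hat{P}}$ rather than to $\mathbf{P}_{ref}$---and your proof does not reach that.

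The missing idea is the paper's replacement construction. One observes that as long as an orbit in $Q_r$ avoids the dispersing arcs, it is indistinguishable from an orbit in the polygon $P'$ obtained by replacing each arc by its chord; the new angles of $P'$ satisfy $\alpha=(\pi+\theta)/2$ with $\theta$ a reflex angle of $P$, so rationality of $P$ forces rationality of $P'$. One then invokes the classical fact (Zemlyakov--Katok, Boldrighini--Keane--Marchetti, Kerckhoff--Masur--Smillie) that almost every orbit in a rational polygon is spatially dense, hence hits the chord segments, hence---back in $Q_r$---hits a dispersing arc. This is what upgrades $\mu(\mathcal H)>0$ to full measure. Your suggestion that rationality is needed to tame the singularity set for the Katok--Strelcyn conditions is off the mark: the singularity structure (corners, curvature jumps, tangencies) is identical for rational and irrational polygons, and the standard verification goes through uniformly. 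Rationality enters solely through the density-of-orbits argument, and without it you cannot rule out a positive-measure set of orbits trapped forever in the flat part of $\partial Q_r$.
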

	\begin{proof}
        Let $P\in\mathbf{\hat{P}}$ have $n$ vertices. Assume $\{v_0,\ v_1,\dots,\ v_{n-1}\}$ and $\{e_1=v_0v_1,\ e_2=v_1v_2,\dots,\ e_n=v_{n-1}v_0\}$ are sets of its vertices and edges, respectively. Let
        $$r_k=\min\{|v_k-x|\ :\ x\in e_i\ for\ i\neq k\ and\ i\neq k+1\},$$
        where $|.|$ is the euclidean distance in $\mathbf{R}^2$ and $k=0,1,\dots,n-1$ (if $k=0$ then $i=2,3,\dots,n-1$). It is easy to see that $r_k$ is well-defined since it is the minimum value of a continuous function on a compact set. Moreover, $r_k>0$. If we let 
        \begin{equation}\label{e1}
	        r_P=\min\{\frac{r_0}{2},\frac{r_1}{2},\dots,\frac{r_{n-1}}{2}\},
        \end{equation}
        then the hard ball of radius $r<r_P$ will be able to hit all edges of $P$. Therefore, when $r<r_P$ the boundary of the equivalent mathematical billiard has some dispersing components.\par
        In fact, if a radius of the particle is sufficiently large then some parts of the boundary of a billiard table become ``non-visible" to the particle. Therefore it does not matter for dynamics what is the exact structure of this ``non-visible" boundary. Such situation may occur e.g. for polygons \cite{BT04}.\par
        As long as a trajectory does not hit dispersing parts, it can be considered as a trajectory in the rational polygon $P'$ that shapes by replacing the dispersing components of the boundary by flat segments (Fig. \ref{fig2}). More precisely, angles $\theta$ and $\alpha$ satisfy the equation $\alpha=\frac{\pi+\theta}{2}$ and $\theta$ is commensurate with $\pi$, therefore, $\alpha$ is commensurate with $\pi$.\par
	\begin{figure}[h]
	\centering
	    \includegraphics[width=6cm]{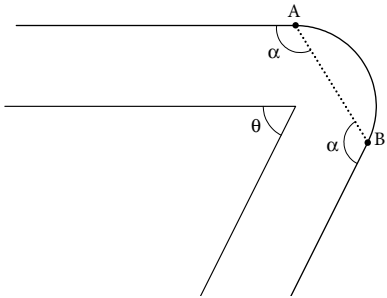}\caption{Replacing a dispersing part with a line segment.}\label{fig2}
    \end{figure}
        It is well-known that almost all orbits of billiards in rational polygons are spatially dense inside the billiard table \cite{BKM78,KMS86,ZK75}. Thus, all non-exceptional trajectories in $P'$ will hit all edges of $P'$, including those replaced the dispersing parts of the boundary. This implies that almost all trajectories (full measure in phase space) in the mathematical billiard equivalent to the physical billiard in $P$ will hit at least one dispersing component. Note that after the first reflection off a dispersing component, the forward trajectory is not the same as the one in $P'$. Thus, we cannot use density of almost all orbits in $P'$ to show that the trajectory will hit dispersing parts of the boundary infinitely many times.\par
        So, there is a full measure subset of points in the phase space such that their trajectories hit at least one dispersing component. Let $(x,\theta)$ be a point in that subset. By the continuity of the system on initial conditions, there is a neighborhood of positive measure of the point $(x,\theta)$ such that trajectories of all points in that neighborhood hit the same dispersing part as the trajectory of $(x,\theta)$ hits for the first time. Then the Poincare recurrence theorem implies that almost all trajectories in this neighborhood will return and hit that dispersing part infinitely many times. The convergence of continued fractions of such trajectories that hit dispersing components follows from the Seidel-Stern theorem and (\ref{SS-thm}). Hence, for a full measure subset of the phase space of the physical billiard in non-convex simply connected rational polygons, we have hyperbolicity. Moreover, it implies there are at most countable number of ergodic components such that the Kolmogorov-Sinai entropy is positive on each of them \cite{BS73,Bun90,Sin70}.
    \end{proof}
    It follows from Lemma \ref{lem1} and \ref{lem2} that $\mathbf{P}_{ref}$ is an open dense set in $\mathbf{P}$. Therefore, being a polygon with at least one reflex angle in $\mathbf{P}$ is topologically generic.
    \begin{theorem}
        There is an open dense subset of $\mathbf{P}$ such that the physical billiard in the polygons of this subset (when the radius of the hard ball is small enough) is hyperbolic on a subset of positive measure of their phase spaces.
	\end{theorem}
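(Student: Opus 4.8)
The plan is to take the asserted open dense set to be $\mathbf{P}_{ref}$ itself: by Lemmas~\ref{lem1} and~\ref{lem2} it is open and dense in $\mathbf{P}$, so everything reduces to showing that for every $P\in\mathbf{P}_{ref}$ the physical billiard in $P$ is hyperbolic on a positive measure subset of its phase space once the radius $r$ is small enough. The argument follows the proof of Theorem~\ref{thm1} almost verbatim; the only place where rationality of $P$ entered there was in upgrading ``a positive measure set of orbits meets a dispersing arc'' to ``a full measure set does'' (through spatial density of orbits in rational polygons). Since we now settle for a positive measure subset, that upgrade is unnecessary and the Poincar\'e recurrence theorem alone suffices.

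First I would fix $P\in\mathbf{P}_{ref}$, pick a reflex vertex $v$, and define $r_P>0$ exactly as in \eqref{e1}: this number depends only on the Euclidean geometry of $P$ (distances from vertices to non-adjacent edges) and on no arithmetic condition, hence it is available for every $P\in\mathbf{P}_{ref}$. For $r<r_P$ the hard ball of radius $r$ can pass near $v$, so the mathematical billiard table $Q=Q_{P,r}$ obtained by pushing $\partial P$ inward by $r$ has at least one genuinely dispersing boundary component $D$ --- an arc of the circle of radius $r$ centered at $v$, of positive length (namely $r$ times the excess of the interior angle at $v$ over $\pi$) --- with curvature $\tfrac1r$ in the dispersing sense, exactly as recalled before \eqref{SS-thm}.

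Next I would work on $\Lambda_Q$ with the standard finite invariant billiard measure $d\mu=c\cos\varphi\,d\ell\,d\varphi$ ($\ell$ arc length along $\partial Q$) and set $A=\{(x,\varphi)\in\Lambda_Q:\ x\in D\}$, so that $\mu(A)$ is a positive multiple of $|D|$, hence $\mu(A)>0$. Applying the Poincar\'e recurrence theorem to the (a.e.\ defined, measure-preserving, invertible) billiard map $T$ and to $T^{-1}$, one gets a subset $A^{*}\subset A$ with $\mu(A\setminus A^{*})=0$ such that every orbit through $A^{*}$ meets $D$ infinitely often both in the future and in the past. Let $\mathcal H=\bigcup_{n\in\mathbf Z}T^{n}A^{*}$; it is $T$-invariant and $\mu(\mathcal H)\ge\mu(A^{*})>0$. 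Along each orbit of $\mathcal H$ the reflection numbers $\{i_k\}$ on dispersing arcs are infinite in both directions, so its continued fraction has the form displayed just before \eqref{SS-thm}: all its partial denominators are positive (flat reflections contribute only the travel-time sums, dispersing reflections contribute the terms $\tfrac{2}{r\cos\varphi_{i_k}}\ge\tfrac2r$), and since infinitely many of them are $\ge\tfrac2r$ their sum diverges, which is \eqref{SS-thm}. By the Seidel--Stern theorem the continued fraction converges along every orbit of $\mathcal H$, and then, exactly as in the proof of Theorem~\ref{thm1} and by \cite{BS73,Bun90,Sin70}, this yields nonzero Lyapunov exponents $\mu$-a.e.\ on $\mathcal H$; that is, the physical billiard in $P$ is hyperbolic on the positive measure invariant set $\mathcal H$.

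The hard part will be the last step --- extracting genuine hyperbolicity from mere convergence of the continued fraction in this mixed regime, where long runs of flat (parabolic) reflections separate the sporadic dispersing ones and the map is nowhere uniformly hyperbolic --- but this is precisely the mechanism already invoked in Theorem~\ref{thm1}, so no new analysis is required beyond the two observations above: $A$ has positive measure, and Poincar\'e recurrence replaces the equidistribution argument. It should be emphasized that, unlike Theorem~\ref{thm1}, this argument cannot be pushed to full measure for a general (non-rational) $P\in\mathbf{P}_{ref}$: without equidistribution of polygonal orbits there could be a positive measure set of trajectories that never reach any dispersing arc, and the statement correctly asserts only positivity of the measure of the hyperbolic set.
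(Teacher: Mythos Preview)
Your proposal is correct and follows essentially the same approach as the paper: both take $\mathbf{P}_{ref}$ as the open dense set, define $r_P$ via \eqref{e1}, set $A$ to be the phase points based on a dispersing arc, invoke Poincar\'e recurrence to get infinitely many returns to $A$, and then apply Seidel--Stern and \eqref{SS-thm} to obtain convergence of the continued fractions and hence hyperbolicity on a positive measure set. Your write-up is in fact somewhat more explicit than the paper's (you spell out the invariant measure, apply recurrence in both time directions, and form the invariant saturate $\mathcal H$), but the underlying argument is identical.
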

	\begin{proof}
        Let $P\in\mathbf{P}_{ref}$ and $r_P>0$ be the maximum radius of the hard ball defined in (\ref{e1}) such that the particle can hit all edges of $P$. Then the boundary of the mathematical billiard equivalent to the physical billiard in $P$ has some dispersing components which are arcs of a circle of radius $r<r_P$. Let, $$A=\{(x,\varphi)\ :\ For\ all\ x\ in\ a\ dispersing\ component\ and\ \varphi\in(\frac{-\pi}{2},\frac{\pi}{2})\}.$$
	    It follows from the definition of $A$ that it is a subset of positive measure in the phase space (one can exclude the exceptional directions which form a measure zero set in the phase space). The Poincare recurrence theorem implies that almost all points of $A$ will return to $A$ infinitely many times under the action of the billiard map. That means almost all trajectories of points in $A$ will hit a dispersing part infinitely many times. Then the convergence of continued fractions of trajectories of almost all points of $A$ follows from the Seidel-Stern theorem and (\ref{SS-thm}). Hence a physical billiard in a polygon with a reflex angle is hyperbolic at least on a subset of positive measure of its phase space.
	\end{proof}
	We conjecture that in fact generically a physical billiard in polygon is ergodic for any radius of a moving particle (which is of course not that large that the particle cannot move within a polygon). To prove our conjecture one instead needs to show that almost all orbits in a physical billiard in a polygon will eventually hit any segment which is a part of any side of a polygon. Hence a ``large" physical particle must hit all (rather than one) vertices of a polygon. For instance, if each vertex of a convex polygon gets replaced by a focusing arc it is possible to prove ergodicity \cite{CT98}. In this case the mechanism of defocusing \cite{Bun74} ensures hyperbolicity of such semi-focusing billiards on entire phase space. The current theory of billiards in polygons  establishes only that any billiard orbit in a polygon is either periodic or its closer contains just one vertex of this polygon, which is by far not enough. We are confident though that our conjecture holds.

\end{document}